\documentclass[reqno]{amsart}
\usepackage{amsmath}


\usepackage{graphicx}
\usepackage{amscd}
\usepackage{amsmath, amssymb}
\usepackage{graphics}
\usepackage{graphicx}
\usepackage{epsfig}
\usepackage{xcolor}
\usepackage{soul}
\usepackage{tikz}

\newtheorem{theorem}{Theorem}

\newtheorem{definition}[theorem]{Definition}

\newtheorem{proposition}[theorem]{Proposition}

\newtheorem{question}[theorem]{Question}

\begin{document}

\title{Twin groups representations}

\author{Mohamad N. Nasser}

\address{Mohamad N. Nasser\\
         Department of Mathematics and Computer Science\\
         Beirut Arab University\\
         P.O. Box 11-5020, Beirut, Lebanon}
         
\email{m.nasser@bau.edu.lb}

\begin{abstract}
We construct two representations of the twin group $T_n, n\geq 2$, namely $\eta_1: T_n \rightarrow \mathrm{Aut}(\mathbb{F}_n)$ and $\eta_2: T_n \rightarrow \mathrm{GL}_n(\mathbb{Z}[t^{\pm 1}])$, where $\mathbb{F}_n$ is a free group with $n$ generators and $t$ is indeterminate. We then analyze some characteristics of these two representations, such as irreducibility and faithfulness. Moreover, we prove that both representations can be extended to the virtual twin group $VT_n$ in the $2$-local extension way, for $n\geq 2$, and we find their $2$-local extensions. On the other hand, we obtain a different result for the welded twin group $WT_n$. More deeply, we show that $\eta_1$ cannot be extended to $WT_n$ in the $2$-local extension way, for $n\geq 3$, while $\eta_2$ can be extended to $WT_n$ in the $2$-local extension way, for $n\geq 2$, and we find its $2$-local extensions.
\end{abstract}

\maketitle

\renewcommand{\thefootnote}{}
\footnote{\textit{Keywords and phrases.} Twin Groups, Braid Groups, Group Representations, Irreducibility.}
\footnote{\textit{Mathematics Subject Classification.} Primary: 20F36.}

\section{Introduction} 

The twin group $T_n$, where $n\geq 2$, is a right-angled Coxeter group with $n-1$ generators $s_1,s_2,\ldots, s_{n-1}$ and the following defining relations.
\begin{align*}
&\hspace{0.27cm} s_i^2=1,\hspace{0.75cm} i=1,2,\ldots,n-1,\\
&s_is_j=s_js_i, \hspace{0.3cm} |i-j|\geq 2.
\end{align*}
This group first appeared in the work of G. Shabat and V. Voevodsky in \cite{Shabat1990}, and then investigated by M. Khovanov, who provided a geometric interpretation similar to that of the Artin braid group (see \cite{Khovanov1996, Khovanov1997}). Then, it appeared under the name of flat braids in \cite{Merkov1999} and planar braids in \cite{Mosto2020}. The group $T_n$ and its group extensions are studied for their properties related to the braid groups, including their algorithms for checking equivalence under Markov moves, as well as their applications in the study of the fundamental group of configuration spaces (see \cite{Bellingeri2024}).

\vspace*{0.1cm}

There exist two well-known group extensions of the twin group $T_n$, namely the virtual twin group $VT_n$ and the welded twin group $WT_n$. These groups are defined in \cite{Bardakov2019} in close analogy to the virtual and welded braid groups. Each of them is generated by two distinct families of generators: the classical generators $s_1, s_2, \ldots, s_{n-1}$ corresponding to those of $T_n$, and an additional family of generators $\rho_1, \rho_2, \ldots, \rho_{n-1}$ that encode the virtual (or welded) interactions. The defining relations between these generators, which distinguish $VT_n$ from $WT_n$, are presented in Section~2. For a more detailed discussion of the structure and properties of the groups $T_n$, $VT_n$, and $WT_n$, we refer the reader to \cite{Bardakov2019, Naik2020, Bellingeri2024}.

\vspace*{0.1cm}

Representation theory is a branch of Mathematics that studies algebraic structures by representing their elements as automorphisms of a free group or as matrices. Constructing representations of any algebraic structure leads to solving many problems in other mathematical branches. For instance, constructing a faithful representation of a group can lead to solving the word problem of this group. To the best of our knowledge, the existence of faithful representations for the groups $VT_n$ and $WT_n$ remains an open problem, except in some particular or low-dimensional cases (see, for example, \cite{Bardakov2019}). On the other hand, the study of the irreducibility of a group representation is of fundamental importance, as irreducible representations often reveal deep structural properties of the underlying group. Furthermore, such representations play a central role in various mathematical and physical theories, including Quantum Mechanics (see \cite{J.Yang}).

\vspace*{0.1cm} 

One of the most famous types of matrix representations in this field is called the $k$-local representation. A representation of a group $G$ with a finite number of generators $a_1,a_2,\ldots, a_{n-1}$ into $\mathrm{GL}_m(\mathbb{Z}[t^{\pm 1}])$, where $t$ is an indeterminate, is said to be $k$-local if the image of the generator $a_i, \ 1\leq i \leq n-1,$ has the form
$$\left( \begin{array}{c|@{}c|c@{}}
   \begin{matrix}
     I_{i-1} 
   \end{matrix} 
      & 0 & 0 \\
      \hline
    0 &\hspace{0.2cm} \begin{matrix}
   		M_i
   		\end{matrix}  & 0  \\
\hline
0 & 0 & I_{n-i-1}
\end{array} \right),$$ 
where $M_i \in \mathrm{M}_k(\mathbb{Z}[t^{\pm 1}])$ with $k=m-n+2$ and $I_r$ is the $r\times r$ identity matrix. It is important to classify and study the characteristics of $k$-local representations of many algebraic structures with different degrees $k$, especially the braid group and their group and monoid extensions (see, for instance, \cite{Mikha2013,chreif2024,Mayasi2025,M.N.12,M.N.13}).
 
\vspace*{0.1cm} 

The primary objective of this article is to construct representations of the twin group $T_n$ in two different ways: as automorphisms of a free group and as matrix representations. In addition, we study several fundamental properties of these representations, including their irreducibility and faithfulness. Furthermore, we investigate the possibility of extending the constructed representations to the virtual twin group $VT_n$ and the welded twin group $WT_n$.

\vspace*{0.1cm} 

The paper is organized as follows. Section~2 provides the necessary definitions and properties used in this work. In Section~3, we construct a representation $\eta_1:T_n \to \mathrm{Aut}(\mathbb{F}_n)$, where $\mathbb{F}_n$ is the free group generated by $x_1, x_2, \ldots, x_n$ (Proposition~\ref{prop1}). From this representation, we derive a corresponding matrix representation using the Magnus representation together with Fox derivatives (Propositions~\ref{proppp} and \ref{parop}), and we investigate its properties. In particular, we show that the representation is reducible, while its $(n-1)$-dimensional composition factor over $\mathbb{C}$, under the specialization $t \in \mathbb{C}^*$, is irreducible if and only if $t \neq \dfrac{2n-2}{n-2}$ and $t \neq 2$ (Theorems~\ref{reddd} and \ref{reddd1}). Furthermore, we establish that the representation is faithful for $n=2$ and $n=3$ (Theorem~\ref{fa1}). We then classify all possible extensions of $\eta_1$ to the virtual and welded twin groups in the case $n=2$ (Theorem~\ref{n2}), and for $n\geq 3$, we determine all $2$-local extensions of $\eta_1$ to $VT_n$ (Theorem~\ref{n3}) while showing that no $2$-local extensions exist to $WT_n$ (Theorem~\ref{thmmm}). In Section~4, we construct a family of matrix representations of $T_n$ (Proposition~\ref{porp}) and prove their reducibility (Theorem~\ref{rrrrr}), and we show that these representations are faithful for $n=2$ and unfaithful for all $n\geq 3$ (Theorem~\ref{fa2}). Finally, we identify all $2$-local extensions of these families to $VT_n$ and $WT_n$ for all $n\geq 3$ (Theorems~\ref{n33} and \ref{n333}).
\vspace{0.1cm}

\section{Generalities} 

For $n \geq 2$, the twin group, namely $T_n$, has generators $s_1,s_2,\ldots,s_{n-1}$ that satisfy the following relations.
\begin{equation} \label{eqs1}
\hspace{1.46cm} s_i^2=1,\hspace{0.95cm} i=1,2,\ldots,n-2,
\end{equation}
\begin{equation} \label{eqs2}
s_is_j=s_js_i, \hspace{0.55cm} |i-j|\geq 2.
\end{equation}
In particular, we see that $T_2=\langle s_1 \ |\ s_1^2=1 \rangle =\mathbb{Z}_2$ is the cyclic group of order $2$ and $T_3=\langle s_1,s_2 \ |\ s_1^2=s_2^2=1 \rangle=\mathbb{Z}_2*\mathbb{Z}_2$ is the infinite dihedral group.

\vspace*{0.1cm}

On the other hand, for $n\geq 2$, the virtual twin group, denoted by $VT_n$, is a group extension of $T_n$ that is generated by the generators $s_1,s_2, \ldots,s_{n-1}$ of $T_n$ and the generators $\rho_1,\rho_2, \ldots, \rho_{n-1}$. In addition to the relations (\ref{eqs1}) and (\ref{eqs2}) of $T_n$, the generators $s_i$ and $\rho_i$, $1\leq i \leq n-1$, of $VT_n$ satisfy the following relations.
\begin{equation} \label{eqs3}
\ \ \ \ \rho_i\rho_{i+1}\rho_i = \rho_{i+1}\rho_i\rho_{i+1}, \hspace{0.55cm} i=1,2,\ldots,n-2,
\end{equation}
\begin{equation} \label{eqs4}
\rho_i\rho_j = \rho_j\rho_i ,\hspace{1.5cm} |i-j|\geq 2,
\end{equation}
\begin{equation} \label{eqs5}
\ \ \ \ \ \ \ \ \ \ \ \ \rho_i^2 = 1 ,\hspace{2cm} i=1,2,\ldots,n-1,
\end{equation}
\begin{equation} \label{eqs6}
s_i\rho_j=\rho_js_i ,\hspace{1.55cm} |i-j|\geq 2,
\end{equation}
\begin{equation} \label{eqs7}
\ \ \ \ \rho_i\rho_{i+1}s_i=s_{i+1}\rho_i\rho_{i+1}, \hspace{0.6cm} i=1,2,\ldots,n-2. \vspace*{0.1cm}
\end{equation}
The quotient of $VT_n$ by adding the relations 
\begin{equation} \label{eqs8}
\ \ \ \ \rho_is_{i+1}s_i=s_{i+1}s_i\rho_{i+1}, \hspace{0.6cm} i=1,2,\ldots,n-2,
\end{equation}
is called the welded twin group and denoted by $WT_n$, which means that the generators of $WT_n$ satisfy all the relations of $VT_n$ and the relations (\ref{eqs8}) in addition. Notice that the relations (\ref{eqs6}), (\ref{eqs7}), and (\ref{eqs8}) are called the mixed relations. In addition, for $i=1,2,\ldots,n-2$, the relations (\ref{eqs7}) are equivalent to the relations
$$\rho_{i+1}\rho_is_{i+1}=s_i\rho_{i+1}\rho_i,$$
and the relations (\ref{eqs8}) are equivalent to the relations 
$$s_{i+1}\rho_i\rho_{i+1}=\rho_i\rho_{i+1}s_i.$$

\vspace*{0.1cm}

In what follows, we give the main definition of $k$-local representations of a group $G$ with finite number of generators.

\begin{definition} \cite{Nasserlocal}
Let $G$ be a group with generators $a_1,a_2,\ldots,a_{n-1}$. A representation $\theta: G \rightarrow \mathrm{GL}_{m}(\mathbb{Z}[t^{\pm 1}])$ is said to be $k$-local if it is of the form
$$\theta(a_i) =\left( \begin{array}{c|@{}c|c@{}}
   \begin{matrix}
     I_{i-1} 
   \end{matrix} 
      & 0 & 0 \\
      \hline
    0 &\hspace{0.2cm} \begin{matrix}
   		M_i
   		\end{matrix}  & 0  \\
\hline
0 & 0 & I_{n-i-1}
\end{array} \right) \hspace*{0.2cm} \text{for} \hspace*{0.2cm} 1\leq i\leq n-1,$$ 
where $M_i \in \mathrm{GL}_k(\mathbb{Z}[t^{\pm 1}])$ with $k=m-n+2$ and $I_k$ is the $k\times k$ identity matrix. The representation $\theta$ is said to be homogeneous if all the matrices $M_i$ are equal.
\end{definition}

Remark that if $G'$ is a group with $2(n-1)$ generators $a_1,a_2,\ldots,a_{n-1}$ and $b_1,b_2,\ldots,b_{n-1}$, then, the concept of $k$-local representations could be extended in the following way. A $k$-local representation $\theta: G' \rightarrow \mathrm{GL}_{m}(\mathbb{Z}[t^{\pm 1}])$ is a representation of the form
$$\theta(a_i) =\left( \begin{array}{c|@{}c|c@{}}
   \begin{matrix}
     I_{i-1} 
   \end{matrix} 
      & 0 & 0 \\
      \hline
    0 &\hspace{0.2cm} \begin{matrix}
   		M_i
   		\end{matrix}  & 0  \\
\hline
0 & 0 & I_{n-i-1}
\end{array} \right) \text{ and  } \ \theta(b_i) =\left( \begin{array}{c|@{}c|c@{}}
   \begin{matrix}
     I_{i-1} 
   \end{matrix} 
      & 0 & 0 \\
      \hline
    0 &\hspace{0.2cm} \begin{matrix}
   		N_i
   		\end{matrix}  & 0  \\
\hline
0 & 0 & I_{n-i-1}
\end{array} \right) $$
for $1\leq i\leq n-1,$ where $M_i,N_i \in \mathrm{GL}_k(\mathbb{Z}[t^{\pm 1}])$ with $k=m-n+2$ and $I_k$ is the $k\times k$ identity matrix. In this case, $\theta$ is homogeneous if all the matrices $M_i$ are equal and all the matrices $N_i$ are equal.

\vspace*{0.2cm}

We now introduce the Fox derivatives in the following definition.

\begin{definition}
Let $\mathbb{F}_n$ be a free group of rank $n$ with free basis $x_1,x_2,\ldots,x_n$. We define, for $k=1,2,\ldots,n,$ the Fox derivatives on the group ring $\mathbb{Z}[\mathbb{F}_n]$ as follows.
\begin{itemize}
\item[(a)] $\dfrac{\partial x_i}{\partial x_k}=\delta_{ik}$,
\item[(b)] $\dfrac{\partial x^{-1}_i}{\partial x_k}=-\delta_{ik}x^{-1}_i$,
\item[(c)] $\dfrac{\partial}{\partial x_k}(ab)= \dfrac{\partial a}{\partial x_k}\epsilon(a)+a\dfrac{\partial b}{\partial x_k}$, $a,b \in \mathbb{Z}[\mathbb{F}_n]$.
\end{itemize}
Note that $\epsilon(a)=1$ if $a\in \mathbb{F}_n$. Here $\delta_{ik}$ is the Kronecker symbol.
\end{definition}

\vspace{0.1cm}

\section{The representation $\eta_1$ of $T_n$} 

Let $\mathbb{F}_n$ be a free group of rank $n\in \mathbb{N}^*$ with $n$ generators $x_1,x_2,\ldots, x_n$ and let $\mathrm{Aut}(\mathbb{F}_n)$ be the automorphism group of $\mathbb{F}_n$. In this section, we construct a representation of $T_n$ to $\mathrm{Aut}(\mathbb{F}_n)$ and we study some of its characteristics. Moreover, we study the existence of extensions of this representation to $VT_n$ and $WT_n$. 

\vspace{0.1cm}

\subsection{Constructing the representation} 

Consider the mapping $\eta_1: T_n \rightarrow \mathrm{Aut}(\mathbb{F}_n)$ given as follows.
$$\eta_1(s_i):
\left\{\begin{array}{l}
x_i\rightarrow x_ix_{i+1}x_i^{-1},\\
x_{i+1}\rightarrow x_ix_{i+1}^{-1}x_ix_{i+1}x_i^{-1},\\
x_k\rightarrow x_k \text{ for } k \notin \{i, i+1\}. \\
\end{array}\right.$$

\begin{proposition} \label{prop1}
The mapping $\eta_1$ defines a representation of $T_n$.
\end{proposition}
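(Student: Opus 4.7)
The plan is to check that (i) each $\eta_1(s_i)$ is indeed an element of $\mathrm{Aut}(\mathbb{F}_n)$, and (ii) that the defining relations \eqref{eqs1} and \eqref{eqs2} of $T_n$ are preserved. Since $\eta_1(s_i)$ is prescribed on the free basis $x_1,\ldots,x_n$ of $\mathbb{F}_n$, it automatically extends to a unique endomorphism; the only thing one needs is bijectivity, and I will obtain that as a byproduct of the involution check in step (ii).

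First I would verify that $\eta_1(s_i)^2 = \mathrm{id}_{\mathbb{F}_n}$. For a generator $x_k$ with $|i-k|\geq 2$ there is nothing to do since $x_k$ is fixed, so the content is in the two cases $k=i$ and $k=i+1$. For $k=i$, apply $\eta_1(s_i)$ to $x_ix_{i+1}x_i^{-1}$ and use the substitution of $x_i$ and $x_{i+1}$; the resulting word telescopes after the obvious cancellations back to $x_i$. For $k=i+1$, apply $\eta_1(s_i)$ to $x_ix_{i+1}^{-1}x_ix_{i+1}x_i^{-1}$: writing $a=\eta_1(s_i)(x_i)$ and $b=\eta_1(s_i)(x_{i+1})$, one computes successively $ab^{-1}$, $ab^{-1}a$, $ab^{-1}ab$, $ab^{-1}aba^{-1}$, and each product collapses by cancelling matching $x_{i+1}x_{i+1}^{-1}$ or $x_ix_i^{-1}$ substrings until one is left with $x_{i+1}$. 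Once $\eta_1(s_i)^2=\mathrm{id}$ is established, $\eta_1(s_i)$ is its own inverse and therefore lies in $\mathrm{Aut}(\mathbb{F}_n)$, which handles both item (i) and relation \eqref{eqs1}.

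Next I would verify the commutation relation \eqref{eqs2}: for $|i-j|\geq 2$, say $i<j$ so that $\{i,i+1\}\cap\{j,j+1\}=\emptyset$, check $\eta_1(s_i)\eta_1(s_j)(x_k)=\eta_1(s_j)\eta_1(s_i)(x_k)$ for every generator $x_k$. Split into three cases according to whether $k\in\{i,i+1\}$, $k\in\{j,j+1\}$, or $k$ is outside both pairs. In the third case both compositions fix $x_k$. In the first case, $\eta_1(s_j)$ fixes $x_k$, so the right side becomes $\eta_1(s_i)(x_k)$; and since the image $\eta_1(s_i)(x_k)$ is a word in $x_i,x_{i+1}$ only, which are fixed by $\eta_1(s_j)$, the left side equals $\eta_1(s_i)(x_k)$ as well. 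The case $k\in\{j,j+1\}$ is symmetric. This finishes the relation check.

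I do not expect any serious obstacle: the whole argument reduces to two free-reduction calculations in $\mathbb{F}_n$ for the involution relation plus a disjoint-support observation for the commutation relation. The only delicate step is keeping track of the nested $x_i,x_i^{-1}$ cancellations in the computation of $\eta_1(s_i)^2(x_{i+1})$, which I would do by accumulating the product one factor at a time as indicated above.
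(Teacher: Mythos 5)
Your proposal is correct and follows essentially the same route as the paper: verify the involution relation by direct free-group cancellation on $x_i$ and $x_{i+1}$, and dispose of the commutation relation by the disjoint-support observation. The only difference is that you explicitly note that $\eta_1(s_i)^2=\mathrm{id}$ also yields bijectivity of $\eta_1(s_i)$, a point the paper leaves implicit; this is a welcome bit of extra care but not a change of method.
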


\begin{proof}
To prove that $\eta_1$ defines a representation of $T_n$, it is sufficient to prove that $\eta_1$ preserves the two defining relations of $T_n$. We consider these two relations of in the following cases.
\begin{itemize}
\item[(a)] \underline{$\eta_1(s_i^2)(x_k)=x_k$ for $1\leq i \leq n-1$ and $1\leq k \leq n-1$}: To prove this, we have three sub cases.\vspace*{0.15cm}
\begin{itemize}
\item[•] $\eta_1(s_i^2)(x_i)=\eta_1(s_i)(x_ix_{i+1}x_i^{-1}) \\ \hspace*{1.52cm}=(x_ix_{i+1}x_i^{-1})(x_ix_{i+1}^{-1}x_ix_{i+1}x_i^{-1})(x_ix_{i+1}^{-1}x_i^{-1})\\ \hspace*{1.52cm}=x_i.$\vspace*{0.15cm}
\item[•] $\eta_1(s_i^2)(x_{i+1})=\eta_1(s_i)(x_ix_{i+1}^{-1}x_ix_{i+1}x_i^{-1}) \\ \hspace*{1.875cm}= (x_ix_{i+1}x_i^{-1})(x_ix_{i+1}^{-1}x_i^{-1}x_{i+1}x_i^{-1})(x_ix_{i+1}x_i^{-1})\\ \hspace*{2.33cm}(x_ix_{i+1}^{-1}x_ix_{i+1}x_i^{-1})(x_ix_{i+1}^{-1}x_i^{-1}) \\ \hspace*{1.875cm}= x_{i+1}.$\vspace*{0.15cm}
\item[•] $\eta_1(s_i^2)(x_{k})=\eta_1(s_i)(x_{k})=x_k$ for all $k \notin \{i, i+1\}$.\vspace*{0.15cm}
\end{itemize} 
\item[(b)] \underline{$\eta_1(s_i s_j)(x_k) = \eta_1(s_j s_i)(x_k)$ for $|i-j|\geq 2$ and $1 \leq k \leq n-1$}: This case is straightforward by the construction of $\eta_1$, since $s_i$ and $s_j$ act on disjoint sets of generators in $\mathbb{F}_n$ for $|i-j|\geq 2$, so their actions commute.
\end{itemize}
\vspace*{0.15cm}
Thus, $\eta_1$ preserves the relations of $T_n$, and so, $\eta_1$ defines a representation of $T_n$.
\end{proof}

In what follows, we construct a matrix representation of $T_n$ derived for $\eta_1$, by means of Magnus representation using Fox derivatives. Let $D_k=\dfrac{\partial}{\partial x_k}$, for $1 \leq k \leq n$. We determine the Jacobian matrix of the image of the generators $s_i$ as follows.

$$J(s_i) = \left( \begin{matrix}
D_1(s_i(x_1)) & \dots  & D_n(s_i(x_1))\\
\vdots &   & \vdots & \\
D_1(s_i(x_n)) & \dots & D_n(s_i(x_n)) \end{matrix} \right).$$

\begin{proposition} \label{proppp}
The matrix representation of $T_n$ derived for $\eta_1$ is the representation where the images of the generators $s_i$ of $T_n$ are as follows. For $1\leq i \leq n-1$,
$$s_i\rightarrow \left( \begin{array}{c|@{}c|c@{}}
   \begin{matrix}
     I_{i-1} 
   \end{matrix} 
      & 0 & 0 \\
      \hline
    0 &\hspace{0.2cm} \begin{matrix}
   	1-x_ix_{i+1}x_i^{-1} & x_i\\
   	1+x_ix_{i+1}^{-1}-x_ix_{i+1}^{-1}x_ix_{i+1}x_i^{-1} & -x_ix_{i+1}^{-1}+x_ix_{i+1}^{-1}x_i\\
\end{matrix}  & 0  \\
\hline
0 & 0 & I_{n-i-1}
\end{array} \right).$$ 
\end{proposition}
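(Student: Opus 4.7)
The plan is to read off the Jacobian matrix $J(s_i)$ directly from the Fox derivative rules (a)--(c), exploiting the fact that $\eta_1(s_i)$ fixes every $x_k$ with $k\notin\{i,i+1\}$. For such $k$, rule (a) immediately gives $D_j(\eta_1(s_i)(x_k))=\delta_{jk}$, which furnishes the identity blocks $I_{i-1}$ in the top-left and $I_{n-i-1}$ in the bottom-right, and forces every off-diagonal entry in the rows and columns outside positions $i,i+1$ to vanish. Thus the proof reduces to computing four Fox derivatives, namely $D_i$ and $D_{i+1}$ applied to the two words $\eta_1(s_i)(x_i)=x_ix_{i+1}x_i^{-1}$ and $\eta_1(s_i)(x_{i+1})=x_ix_{i+1}^{-1}x_ix_{i+1}x_i^{-1}$.

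Next, I would compute those four derivatives by repeated application of the Leibniz rule (c), peeling letters off from the left and then invoking (a) and (b) on the resulting single-letter derivatives. For instance, $D_i(x_ix_{i+1}x_i^{-1}) = D_i(x_i) + x_iD_i(x_{i+1}x_i^{-1}) = 1 + x_i\bigl(0 + x_{i+1}(-x_i^{-1})\bigr) = 1 - x_ix_{i+1}x_i^{-1}$, which matches the claimed $(1,1)$ entry of the central $2\times 2$ block. The $(1,2)$ entry falls out of the same word with respect to $x_{i+1}$, and the $(2,1)$ and $(2,2)$ entries are obtained in exactly the same mechanical way from the five-letter word $x_ix_{i+1}^{-1}x_ix_{i+1}x_i^{-1}$.

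The proof is essentially bookkeeping; there is no conceptual obstacle. The only points requiring care are the minus signs introduced by rule (b) for the inverse letters, and the fact that in rule (c) the coefficient $a$ multiplying $D_k(b)$ must be the full unreduced prefix word. The row indexed by $x_{i+1}$ is the most error-prone, since its five-letter word forces four nested applications of the Leibniz rule before collapsing; however, once the four entries are assembled they exactly reproduce the matrix in the statement, and the proof is complete.
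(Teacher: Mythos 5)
Your proposal is correct and follows essentially the same route as the paper: both compute the Jacobian matrix entrywise via the Fox derivative rules, reducing to the four nontrivial derivatives $D_i,D_{i+1}$ of $x_ix_{i+1}x_i^{-1}$ and $x_ix_{i+1}^{-1}x_ix_{i+1}x_i^{-1}$ plus the observation that all remaining entries are Kronecker deltas. Your sample computation of the $(1,1)$ entry matches the paper's case (a) exactly, and the remaining entries are the same mechanical Leibniz-rule bookkeeping the paper carries out.
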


\begin{proof}
Fix $1\leq i \leq n-1$. We aim to compute the Jacobian matrix $J(s_i)$ given above. We have the following five cases.
\begin{itemize}
\item[(a)] $D_i(s_i(x_{i}))=D_i(x_ix_{i+1}x_i^{-1})\\
\hspace*{1.55cm}=D_i(x_i)+x_iD_i(x_{i+1}x_i^{-1})\\
\hspace*{1.55cm}=1+x_i(D_i(x_{i+1})+x_{i+1}D_i(x_i^{-1}))\\
\hspace*{1.55cm}=1+x_i(-x_{i+1}x_i^{-1})\\
\hspace*{1.55cm}=1-x_ix_{i+1}x_i^{-1}.$\vspace*{0.1cm}
\item[(b)] $D_i(s_i(x_{i+1}))=D_i(x_ix_{i+1}^{-1}x_ix_{i+1}x_i^{-1})\\
\hspace*{1.9cm}=D_i(x_i)+x_iD_i(x_{i+1}^{-1}x_ix_{i+1}x_i^{-1})\\
\hspace*{1.9cm}=1+x_i(D_i(x_{i+1}^{-1})+x_{i+1}^{-1}D_i(x_ix_{i+1}x_i^{-1}))\\
\hspace*{1.9cm}=1+x_i(x_{i+1}^{-1}(D_i(x_i)+x_iD_i(x_{i+1}x_i^{-1})))\\
\hspace*{1.9cm}=1+x_i(x_{i+1}^{-1}(1+x_i(D_i(x_{i+1})+x_{i+1}D_i(x_i^{-1}))))\\
\hspace*{1.9cm}=1+x_i(x_{i+1}^{-1}(1+x_i(-x_{i+1}x_i^{-1})))\\
\hspace*{1.9cm}=1+x_i(x_{i+1}^{-1}(1-x_ix_{i+1}x_i^{-1}))\\
\hspace*{1.9cm}=1+x_ix_{i+1}^{-1}-x_ix_{i+1}^{-1}x_ix_{i+1}x_i^{-1}.$\vspace*{0.1cm}
\item[(c)] $D_{i+1}(s_i(x_{i}))=D_{i+1}(x_ix_{i+1}x_i^{-1})\\
\hspace*{1.9cm}=D_{i+1}(x_i)+x_iD_{i+1}(x_{i+1}x_i^{-1})\\
\hspace*{1.9cm}=x_i(D_{i+1}(x_{i+1})+x_{i+1}D_{i+1}(x_i^{-1}))\\
\hspace*{1.9cm}=x_i.$\vspace*{0.1cm}
\item[(d)] $D_{i+1}(s_i(x_{i+1}))=D_{i+1}(x_ix_{i+1}^{-1}x_ix_{i+1}x_i^{-1})\\
\hspace*{2.27cm}=D_{i+1}(x_i)+x_iD_{i+1}(x_{i+1}^{-1}x_ix_{i+1}x_i^{-1})\\
\hspace*{2.27cm}=x_i(D_{i+1}(x_{i+1}^{-1})+x_{i+1}^{-1}D_{i+1}(x_ix_{i+1}x_i^{-1}))\\
\hspace*{2.27cm}=x_i(-x_{i+1}^{-1}+x_{i+1}^{-1}(D_{i+1}(x_i)+x_iD_{i+1}(x_{i+1}x_i^{-1})))\\
\hspace*{2.27cm}=x_i(-x_{i+1}^{-1}+x_{i+1}^{-1}(x_i(D_{i+1}(x_{i+1})+x_{i+1}D_{i+1}(x_i^{-1}))))\\
\hspace*{2.27cm}=x_i(-x_{i+1}^{-1}+x_{i+1}^{-1}(x_i))\\
\hspace*{2.27cm}=x_i(-x_{i+1}^{-1}+x_{i+1}^{-1}x_i)\\
\hspace*{2.27cm}=-x_ix_{i+1}^{-1}+x_ix_{i+1}^{-1}x_i.$\vspace*{0.1cm}
\item[(e)] $D_{k}(s_i(x_{l}))=\delta_{kl}$ for any $1\leq k,l\leq n-1$ with $k,l \notin \{i,i+1\}$.\vspace*{0.1cm}
\end{itemize}
Substituting the obtained results in $J(s_i)$ implies our required result.
\end{proof}

In what follows, we consider a homomorphism $\tau:\mathbb{F}_n \rightarrow \mathbb{Z}[t^{\pm 1}]$, where $t$ is indeterminate, and we let $\tau(x_1)=\tau(x_2)=\cdots=\tau(x_n)=t$ to obtain the following matrix representation of $T_n$ to $\mathrm{GL}_n(\mathbb{Z}[t^{\pm 1}])$. We still call it $\eta_1$ because it is derived from the representation $\eta_1: T_n \rightarrow \mathrm{Aut}(\mathbb{F}_n)$ defined above.

\begin{proposition} \label{parop}
There is a matrix representation $\eta_1: T_n \rightarrow \mathrm{GL}_n(\mathbb{Z}[t^{\pm 1}])$ given by acting on the generators of $T_n$ as follows.
$$s_i\rightarrow \left( \begin{array}{c|@{}c|c@{}}
   \begin{matrix}
     I_{i-1} 
   \end{matrix} 
      & 0 & 0 \\
      \hline
    0 &\hspace{0.2cm} \begin{matrix}
   	1-t & t\\
   	2-t & t-1\\
\end{matrix}  & 0  \\
\hline
0 & 0 & I_{n-i-1}
\end{array} \right) \mathrm{ for } 1\leq i \leq n-1.$$ 
\end{proposition}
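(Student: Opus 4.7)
The plan is to obtain this representation as the specialization of the Magnus-type matrix representation already built in Proposition \ref{proppp}. Since $\tau:\mathbb{F}_n\to\text{GL}_1(\mathbb{Z}[t^{\pm 1}])$ defined by $\tau(x_k)=t$ for all $k$ is a group homomorphism, applying $\tau$ entrywise to the matrices of Proposition \ref{proppp} produces matrices over the commutative ring $\mathbb{Z}[t^{\pm 1}]$. I would first carry out this substitution in the $2\times 2$ non-trivial block: $1-x_ix_{i+1}x_i^{-1}\mapsto 1-t$, $x_i\mapsto t$, $1+x_ix_{i+1}^{-1}-x_ix_{i+1}^{-1}x_ix_{i+1}x_i^{-1}\mapsto 1+1-t=2-t$, and $-x_ix_{i+1}^{-1}+x_ix_{i+1}^{-1}x_i\mapsto -1+t=t-1$. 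This immediately yields the matrix shape stated in the proposition.

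Next, I would verify that the resulting matrices actually lie in $\text{GL}_n(\mathbb{Z}[t^{\pm 1}])$. Because of the block-diagonal structure, it suffices to compute the determinant of the $2\times 2$ block $M=\begin{pmatrix} 1-t & t\\ 2-t & t-1\end{pmatrix}$, which gives $\det(M)=(1-t)(t-1)-t(2-t)=-1$, a unit in $\mathbb{Z}[t^{\pm 1}]$. Hence each $\eta_1(s_i)$ is invertible.

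It remains to check that $\eta_1$ preserves the defining relations of $T_n$. The commuting relation $s_is_j=s_js_i$ for $|i-j|\geq 2$ is automatic, since the non-identity blocks of $\eta_1(s_i)$ and $\eta_1(s_j)$ act on disjoint coordinates. The involution relation $s_i^2=1$ reduces to the $2\times 2$ identity $M^2=I_2$; I would verify this by a direct computation:
\begin{align*}
(1-t)^2+t(2-t) &= 1, & t(1-t)+t(t-1) &= 0,\\
(2-t)(1-t)+(t-1)(2-t) &= 0, & (2-t)t+(t-1)^2 &= 1.
\end{align*}
Thus all relations of $T_n$ are satisfied.

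There is no real obstacle here: the content of the proposition is essentially the consistency of the specialization $\tau(x_k)=t$ with the Magnus representation. The only point that requires verification, rather than substitution, is the involutive identity $M^2=I_2$, and this is a two-line calculation.
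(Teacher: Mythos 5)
Your proposal is correct and follows essentially the same route as the paper, which obtains Proposition \ref{parop} by specializing the Magnus-type matrices of Proposition \ref{proppp} via $\tau(x_k)=t$; the paper in fact states the result without any explicit proof, so your direct checks that $\det(M)=-1$ and $M^2=I_2$ (both computations are correct) supply the verification the paper leaves implicit. No gaps.
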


Form the shape of the representation $\eta_1: T_n \rightarrow \mathrm{GL}_n(\mathbb{Z}[t^{\pm 1}])$, we can see that it is a homogeneous $2$-local representation. In order to generalize our result, we end this subsection with the following question.

\begin{question}
Consider $n\geq 2$ and let $\eta: T_n \rightarrow \mathrm{GL}_n(\mathbb{Z}[t^{\pm 1}])$ be a homogeneous $2$-local representation of $T_n$. What are the possible forms of $\eta$?
\end{question}

\subsection{The irreducibility of $\eta_1$} 

In this part of the paper, we study the irreducibility of the representation $\eta_1: T_n \rightarrow \mathrm{GL}_n(\mathbb{Z}[t^{\pm 1}])$. We start by the following theorem.

\begin{theorem} \label{reddd}
The representation $\eta_1: T_n \rightarrow \mathrm{GL}_n(\mathbb{Z}[t^{\pm 1}])$ is reducible.
\end{theorem}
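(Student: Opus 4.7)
The plan is to exhibit a proper non-trivial invariant subspace of $\mathbb{Z}[t^{\pm 1}]^n$ under $\eta_1$. Since each $\eta_1(s_i)$ acts as the identity on the coordinates outside positions $i, i+1$, any candidate invariant vector will essentially be constrained by how the nontrivial $2\times 2$ block $M = \begin{pmatrix} 1-t & t \\ 2-t & t-1 \end{pmatrix}$ acts.

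First I would compute $M \cdot (1,1)^T$ directly: the two entries are $(1-t)+t = 1$ and $(2-t)+(t-1) = 1$, so $M$ fixes the vector $(1,1)^T$. Therefore, the all-ones vector $v = (1,1,\ldots,1)^T \in \mathbb{Z}[t^{\pm 1}]^n$ satisfies $\eta_1(s_i)\, v = v$ for every $1 \leq i \leq n-1$, because the identity blocks preserve the other coordinates while the nontrivial block preserves the restriction of $v$ to coordinates $i, i+1$.

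Consequently, the free rank-one submodule $V = \mathbb{Z}[t^{\pm 1}]\, v$ is a proper non-trivial $\eta_1(T_n)$-invariant subspace of $\mathbb{Z}[t^{\pm 1}]^n$ (proper since $n \geq 2$, non-trivial since $v \neq 0$). This establishes reducibility.

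The argument is essentially a one-line check once the correct invariant vector is guessed, so there is no real obstacle; the only thing to notice is the row-sum property of the $2\times 2$ block, which immediately produces the invariant line spanned by $(1,1,\ldots,1)^T$. This invariant line is presumably the source of the $(n-1)$-dimensional composition factor whose irreducibility is analyzed in the subsequent Theorem \ref{reddd1}.
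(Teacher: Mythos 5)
Your proof is correct and takes exactly the same route as the paper: both exhibit the all-ones vector $(1,1,\ldots,1)^T$ as fixed by every $\eta_1(s_i)$ (via the row-sum property of the $2\times 2$ block), giving a proper non-trivial invariant subspace. Your version simply spells out the one-line verification that the paper leaves implicit.
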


\begin{proof}
We can clearly see that the vector $(1,1,\ldots,1)^T$ is invariant under $\eta_1(s_i)$ for all $1\leq i \leq n-1$. Therefore, $\eta_1$ is reducible.
\end{proof}

Now, we specialize $t$ to a nonzero complex number and we find a composition factor of degree $n-1$ of $\eta_1$, namely $\eta_1'$. For that, we consider the following new basis for $\mathbb{C}^{n}$, $W=\{v_1,v_2,\ldots, v_n\}$, where $v_i=e_i$ for all $1 \leq i \leq n-1$ and $v_n=(1,1,\ldots,1)^T=e_1+e_2+\ldots+e_{n}$. Here $\{e_1,e_2,\ldots, e_{n}\}$ is the canonical basis of $\mathbb{C}^{n}$. We compute $\eta_1$ in the new basis $W$ as follows.\vspace{0.1cm}

\noindent \underline{For $1\leq i \leq n-2$}:\vspace{0.1cm} \\
$\eta_1(s_i)(v_1)=v_1$,\\
\hspace*{0.7cm} \vdots\\
$\eta_1(s_i)(v_{i-1})=v_{i-1}$,\\
$\eta_1(s_i)(v_{i})=(1-t)v_i+tv_{i+1}$,\\
$\eta_1(s_i)(v_{i+1})=(2-t)v_i+(t-1)v_{i+1}$,\\
$\eta_1(s_i)(v_{i+2})=v_{i+2}$,\\
\hspace*{0.7cm} \vdots\\
$\eta_1(s_i)(v_{n})=v_{n}$.\vspace*{0.1cm}

\noindent \underline{For $i=n-1$}:\vspace{0.1cm} \\
$\eta_1(s_{n-1})(v_1)=v_1$,\\
\hspace*{0.7cm} \vdots\\
$\eta_1(s_{n-1})(v_{n-2})=v_{n-2}$,\\
$\eta_1(s_{n-1})(v_{n-1})=(1-t)v_{n-1}+(2-t)e_n\\
\hspace*{2.31cm}=(1-t)v_{n-1}+(2-t)(-v_1-v_2-\ldots-v_{n-1}+v_n)\\
\hspace*{2.31cm}=(t-2)v_1+(t-2)v_2+\ldots+(t-2)v_{n-2}-v_{n-1}+(2-t)v_n,$\\
$\eta_1(s_{n-1})(v_{n})=v_{n}$.\vspace{0.1cm}\\
Thus, the representation $\eta_1: T_n \rightarrow \mathrm{GL}_n(\mathbb{Z}[t^{\pm 1}])$ in the new basis is given by acting on the generators of $T_n$ as follows.
$$s_i\rightarrow \left( \begin{array}{c|@{}c|c@{}}
   \begin{matrix}
     I_{i-1} 
   \end{matrix} 
      & 0 & 0 \\
      \hline
    0 &\hspace{0.2cm} \begin{matrix}
   	1-t & t\\
   	2-t & t-1\\
\end{matrix}  & 0  \\
\hline
0 & 0 & I_{n-i-1}
\end{array} \right) \text{ for } 1\leq i \leq n-2,$$
and  
$$s_{n-1}\rightarrow \left( \begin{array}{c|@{}c c@{}}
   \begin{matrix}
     I_{n-2} 
   \end{matrix} 
      & 0  \\
      \hline
    \begin{matrix}
   		t-2 &  t-2 & \ldots & t-2  \\
   		0 &  0 & \ldots & 0 \\
   		\end{matrix}  \ &\hspace{0.2cm} \begin{matrix}
   		-1 &  2-t  \\
   		0 &  1  \\   		
   		\end{matrix}  \\
\end{array} \right).\vspace*{0.2cm}$$
Eliminating the last row and the last column in each matrix above, we obtain the composition factor $\eta_1':T_n \rightarrow \mathrm{GL}_{n-1}(\mathbb{Z}[t^{\pm 1}])$ of $\eta_1$ given by acting on the generators of $T_n$ as follows.
$$s_i\rightarrow \left( \begin{array}{c|@{}c|c@{}}
   \begin{matrix}
     I_{i-1} 
   \end{matrix} 
      & 0 & 0 \\
      \hline
    0 &\hspace{0.2cm} \begin{matrix}
   	1-t & t\\
   	2-t & t-1\\
\end{matrix}  & 0  \\
\hline
0 & 0 & I_{n-i-2}
\end{array} \right) \text{ for } 1\leq i \leq n-2,$$
and  
$$s_{n-1}\rightarrow \left( \begin{array}{c|@{}c c@{}}
   \begin{matrix}
     I_{n-2} 
   \end{matrix} 
      & 0  \\
      \hline
    \begin{matrix}
   		t-2 &  t-2 & \ldots & t-2  \\
   		\end{matrix}  \ &\hspace{0.2cm} \begin{matrix}
   		-1  \\
   		\end{matrix}  \\
\end{array} \right).\vspace*{0.1cm}$$

Recall that we specialize $t$ to a nonzero complex number. We now determine the necessary and sufficient conditions for the irreducibility of the corresponding complex specialization of $\eta_1'$. Note that in the case $t = 0$, all matrices of the representation $\eta_1$ are lower triangular and therefore trivially reducible. For this reason, we exclude this case from our analysis.

\begin{theorem} \label{reddd1}
The representation $\eta_1': T_n \rightarrow \mathrm{GL}_{n-1}(\mathbb{C})$, where $t\neq 0$, is irreducible if and only if $t\neq \dfrac{2n-2}{n-2}$ and $t\neq 2$.
\end{theorem}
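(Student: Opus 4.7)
The plan is to prove both implications of the theorem separately. For the easier direction I would demonstrate reducibility at the two excluded values of $t$ by exhibiting explicit invariant subspaces. When $t=2$, the $(2,1)$-entry $2-t$ of the active $2\times 2$ block of $\eta_1'(s_i)$ vanishes for $i\le n-2$, and the off-diagonal entries $t-2$ in the bottom row of $\eta_1'(s_{n-1})$ vanish as well, so every $\eta_1'(s_i)$ is upper triangular in the basis $v_1,\ldots,v_{n-1}$; hence $\mathrm{span}(v_1)$ is a proper nonzero invariant subspace. When $t=(2n-2)/(n-2)$, I would verify by direct calculation that the vector $v_1+v_2+\cdots+v_{n-1}$ is fixed by every generator: for $i\le n-2$ this is immediate because $v_i+v_{i+1}$ is the $+1$-eigenvector of the $2\times 2$ block of $\eta_1'(s_i)$, while for $s_{n-1}$ the check reduces to the identity $(n-2)(t-2)=2$, which holds precisely at this value of $t$.

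For the converse, assume $t\notin\{0,2,(2n-2)/(n-2)\}$ and let $U\subseteq\mathbb{C}^{n-1}$ be a nonzero $\eta_1'(T_n)$-invariant subspace. Each $\eta_1'(s_i)$ is an involution and hence diagonalizable with eigenvalues $\pm1$; inspecting the matrices shows that the ambient $(-1)$-eigenspace of $\eta_1'(s_i)$ is one-dimensional, spanned by $w_i:=tv_i+(t-2)v_{i+1}$ for $i\le n-2$ and by $v_{n-1}$ for $i=n-1$. First I would treat the case where every $s_i$ fixes $U$ pointwise: then $U$ is contained in the common $+1$-eigenspace of all generators. The common $+1$-eigenspace of $s_1,\ldots,s_{n-2}$ is easily seen to be $\mathrm{span}(v_1+\cdots+v_{n-1})$, and by the computation of the previous paragraph $s_{n-1}$ does not fix this line under our assumption, forcing $U=0$. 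Otherwise some $s_i$ acts nontrivially on $U$, and then for any $u\in U$ with $s_i(u)\ne u$ the vector $u-s_i(u)\in U$ is a nonzero $(-1)$-eigenvector of $\eta_1'(s_i)$, which by the eigenspace description places either some $w_i$ (for $i\le n-2$) or $v_{n-1}$ inside $U$.

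The remaining step is a propagation argument based on three identities that I would verify by direct computation: $s_{i-1}(w_i)-w_i=tw_{i-1}$, $s_{i+1}(w_i)-w_i=-(t-2)w_{i+1}$, and $s_{n-1}(w_{n-2})-w_{n-2}=(t-2)^2v_{n-1}$. Under the hypotheses $t\ne0,2$, these show that once any single $w_i$ lies in $U$, all of $w_1,\ldots,w_{n-2}$ and $v_{n-1}$ must lie in $U$ as well; then from $w_{n-2}$ and $v_{n-1}$ one extracts $v_{n-2}$ (using $t\ne 0$), and inductively from $w_j$ and $v_{j+1}$ one extracts $v_j$ down to $v_1$, so $U=\mathbb{C}^{n-1}$. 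If instead the only available starting vector is $v_{n-1}\in U$, then applying $s_{n-2}$ gives $s_{n-2}(v_{n-1})=tv_{n-2}+(t-1)v_{n-1}\in U$, hence $v_{n-2}\in U$ and then $w_{n-2}\in U$, reducing to the previous case. The main obstacle is the bookkeeping at the boundary index $n-1$, where $\eta_1'(s_{n-1})$ departs from the uniform $2$-local pattern of the other generators and must be handled by a separate but analogous computation, both for identifying the common $+1$-eigenspace and for closing the propagation loop that links the $w_i$'s to the basis vectors $v_j$.
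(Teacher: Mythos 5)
Your proposal is correct and follows essentially the same strategy as the paper's proof: force a scalar multiple of $w_j=tv_j+(t-2)v_{j+1}$ into the invariant subspace $U$, deduce $v_{n-1}\in U$ from the computation $s_{n-1}(w_{n-2})-w_{n-2}=(t-2)^2v_{n-1}$, and then propagate back down to all basis vectors using $t\neq 0$. Two points where your write-up is actually tighter than the paper's: your identities $s_{i-1}(w_i)-w_i=tw_{i-1}$ and $s_{i+1}(w_i)-w_i=-(t-2)w_{i+1}$ make explicit the reduction from an arbitrary starting index to the boundary index $n-2$, which the paper dismisses with ``without loss of generality''; and in the case $t=2$ your invariant line $\mathrm{span}(v_1)$ (via upper-triangularity) is the correct one, whereas the vector $(0,\ldots,0,1)^T$ named in the paper is not in fact fixed by $\eta_1'(s_{n-2})$ at $t=2$, though the reducibility conclusion stands either way.
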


\begin{proof}
For the necessary condition, suppose that $t=\dfrac{2n-2}{n-2}$ or $t=2$. Require to prove that $\eta_1'$ is reducible. For that, we consider each case separately.
\begin{itemize}
\item[(a)] In the case $t=\dfrac{2n-2}{n-2}$, we can see that the vector $(1,1,\ldots,1)^T$ is invariant under $\eta'_1(s_i)$ for all $1\leq i \leq n-1$, and so $\eta_1$ is reducible, as required.
\item[(b)] In the case $t=2$, we can see that the vector $(0,\ldots,0,1)^T$ is invariant under $\eta'_1(s_i)$ for all $1\leq i \leq n-1$. Therefore, $\eta_1$ is reducible, as required.
\end{itemize}

Now, for the sufficient condition, suppose that $t\neq \dfrac{2n-2}{n-2}$ and $t\neq 2$. Assume to get a contradiction that $\eta_1'$ is reducible and let $U$ be a nontrivial subspace of $\mathbb{C}^{n-1}$ that is invariant under $\eta_1'$. Pick $u=(u_1,u_2,\ldots, u_{n-1})\in U$ in a way that there exists $1\leq j \leq n-2$ such that $u_j\neq u_{j+1}$. Such $u$ clearly exits, since otherwise we get that $U$ is generated by the vector $(1,1,\ldots,1)^T$, and then we get that $t=\dfrac{2n-2}{n-2}$ as $U$ is invariant under $\eta_1'(s_i)$ for all $1\leq i \leq n-1$, which is impossible. We now consider two cases as follows.
\begin{itemize}
\item \underline{The case $j=n-2$}. In this case, we have $u_{n-2}\neq u_{n-1}$. Now, as $U$ is invariant under $\eta_1'(s_i)$ for all $1\leq i \leq n-1$, we have
$$\eta_1'(s_{n-2})(u)-u=(-u_{n-2}+u_{n-1})\left(te_{n-2}+(t-2)e_{n-1}\right) \in U.$$
But $u_{n-2}\neq u_{n-1}$ implies that 
$$w=te_{n-2}+(t-2)e_{n-1} \in U.$$
Now, we have that 
$$\eta_1'(s_{n-1})(w)=te_{n-2}+(t-1)(t-2)e_{n-1} \in U$$
and so
$$\eta_1'(s_{n-1})(w)-w=(t-2)^2e_{n-1}\in U.$$
But $t\neq 2$, which implies that $$e_{n-1}\in U.$$
On the other hand, we can see that 
\begin{equation} \label{111}
\eta_1'(s_{j-1})(e_{j})-(t-1)e_{j}=te_{j-1}
\end{equation}
for all $2\leq j \leq n-1$. So, as $e_{n-1}\in U$ and $t\neq 0$, direct computations using $n-2$ iterations in Equation (\ref{111}) imply that $e_{j-1} \in U$ for all $2\leq j \leq n-1$. Thus, we get that $e_{j} \in U$ for all $1\leq j \leq n-1$, and so $U=\mathbb{C}^{n-1}$, a contradiction. Therefore, $\eta_1'$ is irreducible in this case.\vspace*{0.1cm}
\item \underline{Case $j < n-2$}. 
Following the same argument as above, we find
\[
(2t-2)(t-2) e_{n-1} \in U.
\]
Since $t \neq 2$ and, as shown in the previous case using Equation \eqref{111}, $e_{n-1} \notin U$, we get  $2t-2 = 0$, which gives $t = 1$. Then, as $U$ is invariant under the transposition matrices $\eta_1'(s_k)$ for $k<n-1$, we get that $e_j - e_{j+1} \in U$. Applying these matrices repeatedly, we obtain $e_{n-2} - e_{n-1} \in U$, and applying $\eta_1'(s_{n-1})$ to this vector yields $e_{n-1} \in U$, leading again to a contradiction as in the previous case. Therefore, $\eta_1'$ is irreducible in this case as well, and the proof is completed.
\end{itemize}
\end{proof}

\subsection{The faithfulness of $\eta_1$} 

In this subsection, we aim to present a result on the faithfulness of the representation $\eta_1$. More deeply, we prove the following theorem.

\begin{theorem} \label{fa1}
The representation $\eta_1: T_n \rightarrow \mathrm{GL}_n(\mathbb{Z}[t^{\pm 1}])$ is faithful for $n=2$ and $n=3$.
\end{theorem}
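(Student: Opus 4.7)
The plan is to exploit the very small and explicit structure of $T_2$ and $T_3$ to verify injectivity directly, avoiding any general faithfulness machinery.

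For $n=2$, the group $T_2\cong \mathbb{Z}_2$ has only two elements, so it is enough to observe that $\eta_1(s_1)=\bigl(\begin{smallmatrix} 1-t & t \\ 2-t & t-1 \end{smallmatrix}\bigr)$ is patently not the identity matrix over $\mathbb{Z}[t^{\pm 1}]$.

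For $n=3$, the key input is the identification $T_3=\mathbb{Z}_2*\mathbb{Z}_2\cong D_\infty$ recalled in Section~2. Setting $r=s_1s_2$, every element of $T_3$ is uniquely of the form $r^k$ or $s_1 r^k$ for some $k\in\mathbb{Z}$. Faithfulness of $\eta_1$ thus reduces to two assertions: (i) $\eta_1(s_1 r^k)\neq I$ for all $k$, and (ii) the matrix $\eta_1(r)=\eta_1(s_1)\eta_1(s_2)$ has infinite order in $\mathrm{GL}_3(\mathbb{Z}[t^{\pm 1}])$.

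Claim (i) is immediate from a determinant calculation: $\det\eta_1(s_i)=(1-t)(t-1)-t(2-t)=-1$, so $\det\eta_1(s_1 r^k)=-1\neq 1=\det I$. For Claim (ii), I would compute the characteristic polynomial of $AB$, where $A=\eta_1(s_1)$ and $B=\eta_1(s_2)$. The invariant vector $(1,1,1)^T$ of Theorem~\ref{reddd} forces $x=1$ to be a root, so the characteristic polynomial factors as $(x-1)q(x)$ for a monic quadratic $q$ whose constant term equals $\det(AB)=1$. To rule out finite order of $AB$ it suffices to specialize $t$ at one value for which $q$ has a root that is not a root of unity, since any identity $(AB)^k=I$ over $\mathbb{Z}[t^{\pm 1}]$ persists under every specialization. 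A convenient choice is $t=-1$, where $q(x)$ should work out to have discriminant a non-square positive integer and hence two real irrational roots, automatically off the unit circle (they multiply to $1$, so one is strictly inside and the other strictly outside). This forces $AB$ to have infinite order.

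I do not expect a genuine obstacle. The only delicate point is confirming that the chosen specialization of $q(x)$ really does have an eigenvalue of absolute value $\neq 1$; this is a one-line check once the characteristic polynomial is written down, and $t=-1$ (or indeed almost any integer value) will do.
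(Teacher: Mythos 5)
Your proposal is correct, and for $n=3$ it takes a genuinely different route from the paper. Both arguments share the same skeleton: identify $T_3\cong\mathbb{Z}_2*\mathbb{Z}_2$, use $\det\eta_1(s_i)=-1$ to kill all elements outside the index-two subgroup $\langle s_1s_2\rangle$, and reduce faithfulness to showing that no power of $\eta_1(s_1)\eta_1(s_2)$ is the identity. Where you diverge is in that last step: the paper computes an explicit entry of $\eta_1((s_1s_2)^r)$ in closed form with Wolfram Mathematica and argues that it cannot vanish for indeterminate $t$ (an argument that passes through expressions in $\sqrt{t}$ and is somewhat delicate to make fully rigorous), whereas you show $\eta_1(s_1s_2)$ has infinite order by a spectral argument. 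Your computation checks out: the characteristic polynomial of $AB$ is $(x-1)\bigl(x^2+(t^2-2t+2)x+1\bigr)$, and at $t=-1$ the quadratic factor $x^2+5x+1$ has discriminant $21$, hence two real irrational roots with product $1$, one of absolute value greater than $1$; since any relation $(AB)^k=I$ over $\mathbb{Z}[t^{\pm1}]$ would survive the specialization $t=-1$, this rules out finite order. Your approach buys a computer-free, fully rigorous verification at the cost of one explicit $3\times3$ characteristic polynomial; the paper's approach produces an exact formula for the relevant entry of every power but leans on symbolic software and a less careful non-vanishing argument. One small point to make explicit when writing it up: elements of the form $s_2r^k$ are covered by your normal form since $s_2=s_1r$, so the coset decomposition $T_3=\langle r\rangle\sqcup s_1\langle r\rangle$ really does exhaust the group.
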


\begin{proof}
For $n=2$, it is trivial that the representation $\eta_2$ is faithful as $T_2=\{e,s_1\}$ and $\eta_2(s_1)\neq I_2$. Now, for $n=3$, we have $T_3=\langle s_1, s_2\rangle$ with $s_1^2=s_2^2=1$. So, the possible elements in $\ker(\eta_1)$ are either $s_1s_2s_1s_2\ldots$ or $s_2s_1s_2s_1\ldots$. But $\det(s_1)=\det(s_2)=-1$, so the possible elements in $\ker(\eta_1)$ should be of even length. Hence, the possible elements in $\ker(\eta_1)$ are either $(s_1s_2)^r$ or $(s_2s_1)^r$. Remark that $(s_1s_2)^r\in \ker(\eta_1)$ if and only if $(s_2s_1)^r \in \ker(\eta_1)$ since
both elements are conjugate. Now, assume to get a contradiction that $\eta_1$ is unfaithful in the case $n=3$, then there exists $r\in \mathbb{N}^*$ such that $(s_1s_2)^r\in \ker(\eta_1)$. Using Wolfram Mathematica, we get that 
$$\eta_1((s_1s_2)^r)=\left( \begin{matrix}
a & b & c\\
d & e & f\\
g & h & i
\end{matrix} \right),$$
where \vspace*{0.2cm}\\
$d-g=\frac{2^{-r} (t-2) \left(\left(-t^2+2t-2-\sqrt{(t-2) ((t-2) t+4)} \sqrt{t}\right)^r-\left(-t^2+2t-2+\sqrt{(t-2) ((t-2) t+4)} \sqrt{t}\right)^r\right)}{\sqrt{t} \sqrt{(t-2) ((t-2) t+4)}}.$
   But we have $d=g=0$ since $(s_2s_1)^r\in \ker(\eta_1)$, which implies that \vspace*{0.2cm}
 \\
   $\frac{2^{-r} (t-2) \left(\left(-t^2+2t-2-\sqrt{(t-2) ((t-2) t+4)} \sqrt{t}\right)^r-\left(-t^2+2t-2+\sqrt{(t-2) ((t-2) t+4)}\sqrt{t}\right)^r\right)}{\sqrt{t} \sqrt{(t-2) ((t-2) t+4)}}=0.$  \vspace*{0.2cm}
\\
   Notice that $t$ is indeterminate, so we get that \vspace*{0.2cm}\\
   $\left(-t^2+2 t-2-\sqrt{(t-2) ((t-2) t+4)} \sqrt{t}\right)^r=\left(-t^2+2t-2+\sqrt{(t-2) ((t-2) t+4)}
   \sqrt{t}\right)^r$. \vspace*{0.2cm}\\
   Thus, we get that $\sqrt{(t-2) ((t-2) t+4)} \sqrt{t}=0$, which is a contradiction as $t$ is indeterminate. Thus, $\eta_1$ is faithful in this case.
\end{proof}

We end this subsection with the following question.

\begin{question}
Is it true that the representation $\eta_1: T_n \rightarrow \mathrm{GL}_n(\mathbb{Z}[t^{\pm 1}])$ is faithful for $n\geq 4$?
\end{question}

\subsection{On extending $\eta_1$ to $VT_n$ and $WT_n$}

We aim to extend the representation $\eta_1: T_n \rightarrow \mathrm{GL}_n(\mathbb{Z}[t^{\pm 1}])$ to $VT_n$ and $WT_n$ in the $2$-local extension way, as these are two group extensions of $T_n$. However, the query is: do such extensions exist? We prove, in this subsection, that such extensions to $VT_n$ exist for all $n\geq 2$, but this is not the case for $WT_n$ in general.

First, in the following theorem, we find the shape of all extensions of $\eta_1$ to $VT_n$ and $WT_n$ when $n=2$. We have $VT_2=WT_2=\langle s_1,\rho_1\rangle$. Without loss of generality, we deal with $VT_2$ in the next theorem and it is the same for $WT_2$.

\begin{theorem} \label{n2}
Let $\hat{\eta}_1: VT_2 \rightarrow \mathrm{GL}_2(\mathbb{Z}[t^{\pm 1}])$ be a representation of $VT_2$ that extends $\eta_1$. Then, $\hat{\eta}_1$ is equivalent to one of the following five representations.
\begin{itemize}
\item[(1)] $\hat{\eta}_1^{(1)}: T_2 \rightarrow \mathrm{GL}_2(\mathbb{Z}[t^{\pm 1}])$ such that $$\hat{\eta}_1^{(1)}(s_1) =\left( \begin{array}{c@{}}
   \begin{matrix}
   		1-t & t\\
   		2-t & t-1
   		\end{matrix}
\end{array} \right)\hspace*{0.15cm} \text{and } \hspace*{0.15cm} \hat{\eta}_1^{(1)}(\rho_1) =\left( \begin{array}{c@{}}
   \begin{matrix}
   		a & b\\
   		\dfrac{1-a^2}{b} & -a
   		\end{matrix}
\end{array} \right),$$ where $a,b \in \mathbb{Z}[t^{\pm 1}], b \text{ is invertible in } \mathbb{Z}[t^{\pm 1}] \text{ and divides } 1-a^2.$\\
\item[(2)] $\hat{\eta}_1^{(2)}: T_2 \rightarrow \mathrm{GL}_2(\mathbb{Z}[t^{\pm 1}])$ such that $$\hat{\eta}_1^{(2)}(s_1) =\left( \begin{array}{c@{}}
   \begin{matrix}
   		1-t & t\\
   		2-t & t-1
   		\end{matrix}
\end{array} \right)\hspace*{0.15cm} \text{and } \hspace*{0.15cm} \hat{\eta}_1^{(2)}(\rho_1) =\left( \begin{array}{c@{}}
   \begin{matrix}
   		1 & 0\\
   		c & -1
   		\end{matrix}
\end{array} \right),$$ where $c \in \mathbb{Z}[t^{\pm 1}].$\\
\item[(3)] $\hat{\eta}_1^{(3)}: T_2 \rightarrow \mathrm{GL}_2(\mathbb{Z}[t^{\pm 1}])$ such that $$\hat{\eta}_1^{(3)}(s_1) =\left( \begin{array}{c@{}}
   \begin{matrix}
   		1-t & t\\
   		2-t & t-1
   		\end{matrix}
\end{array} \right)\hspace*{0.15cm} \text{and } \hspace*{0.15cm} \hat{\eta}_1^{(3)}(\rho_1) =\left( \begin{array}{c@{}}
   \begin{matrix}
   		-1 & 0\\
   		c & 1
   		\end{matrix}
\end{array} \right),$$ where $c \in \mathbb{Z}[t^{\pm 1}].$\\
\item[(4)] $\hat{\eta}_1^{(4)}: T_2 \rightarrow \mathrm{GL}_2(\mathbb{Z}[t^{\pm 1}])$ such that $$\hat{\eta}_1^{(4)}(s_1) =\left( \begin{array}{c@{}}
   \begin{matrix}
   		1-t & t\\
   		2-t & t-1
   		\end{matrix}
\end{array} \right)\hspace*{0.15cm} \text{and } \hspace*{0.15cm} \hat{\eta}_1^{(4)}(\rho_1) =\left( \begin{array}{c@{}}
   \begin{matrix}
   		-1 & 0\\
   		0 & -1
   		\end{matrix}
\end{array} \right).\vspace*{0.1cm}$$
\item[(5)] $\hat{\eta}_1^{(5)}: T_2 \rightarrow \mathrm{GL}_2(\mathbb{Z}[t^{\pm 1}])$ such that $$\hat{\eta}_1^{(5)}(s_1) =\left( \begin{array}{c@{}}
   \begin{matrix}
   		1-t & t\\
   		2-t & t-1
   		\end{matrix}
\end{array} \right)\hspace*{0.15cm} \text{and } \hspace*{0.15cm} \hat{\eta}_1^{(5)}(\rho_1) =\left( \begin{array}{c@{}}
   \begin{matrix}
   		1 & 0\\
   		0 & 1
   		\end{matrix}
\end{array} \right).$$
\end{itemize}
\end{theorem}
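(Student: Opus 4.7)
The first step is to identify the presentation of $VT_2$. For $n = 2$, relations (\ref{eqs3}), (\ref{eqs4}), (\ref{eqs6}), (\ref{eqs7}) and (\ref{eqs8}) are all vacuous (they require indices up to $n-2 = 0$, or index gaps $|i-j|\geq 2$ when there is only one available index), so $VT_2 = WT_2 = \langle s_1, \rho_1 \mid s_1^2 = \rho_1^2 = 1 \rangle$. Defining an extension $\hat\eta_1$ therefore reduces to choosing a matrix $M = \hat\eta_1(\rho_1) \in \mathrm{GL}_2(\mathbb{Z}[t^{\pm 1}])$ subject to the single constraint $M^2 = I_2$; the relation $s_1^2 = 1$ holds automatically since $\hat\eta_1$ agrees with $\eta_1$ on $T_2$ by Proposition \ref{parop}.

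Next, I would write $M = \begin{pmatrix} a & b \\ c & d \end{pmatrix}$ and expand $M^2 = I_2$ entry-by-entry, obtaining the four scalar equations
\begin{align*}
a^2 + bc = 1, \qquad d^2 + bc = 1, \qquad b(a+d) = 0, \qquad c(a+d) = 0.
\end{align*}
Subtracting the first two yields $a^2 = d^2$, and since $\mathbb{Z}[t^{\pm 1}]$ is an integral domain this forces either $a + d = 0$ or $a = d$. This dichotomy drives the whole case analysis.

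If $a + d = 0$, the off-diagonal constraints hold automatically, and only $a^2 + bc = 1$ survives. Splitting further on whether $b$ is nonzero gives form (1), in which $c$ is forced to be $(1-a^2)/b$, and the degenerate case $b = 0$, which forces $a^2 = 1$, hence $a = \pm 1$ in the domain, with $c$ free; the two signs produce forms (2) and (3) respectively. If instead $a = d$ with $a + d \neq 0$, the off-diagonal constraints force $b = c = 0$, and then $a^2 = 1$ yields $M = \pm I_2$, giving forms (4) and (5).

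I do not foresee a serious obstacle: the argument reduces to elementary polynomial algebra in the integral domain $\mathbb{Z}[t^{\pm 1}]$, and invertibility of $M$ is automatic from $M^2 = I_2$. The only points requiring care are (i) to justify the implication $a^2 = 1 \Rightarrow a = \pm 1$ using that $\mathbb{Z}[t^{\pm 1}]$ has no zero divisors, and (ii) to confirm that the five families exhaust all case splits above without redundancy, so that every $M$ satisfying $M^2 = I_2$ lands in exactly one of the listed forms.
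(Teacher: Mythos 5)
Your proposal is correct and follows essentially the same route as the paper: write $\hat\eta_1(\rho_1)$ as a general $2\times 2$ matrix, impose only the relation $\rho_1^2=1$ (all other relations of $VT_2$ being vacuous for $n=2$), derive the same four polynomial equations, and split on $a=\pm d$ to recover the five families. Your explicit appeals to $\mathbb{Z}[t^{\pm 1}]$ being an integral domain (for $a^2=d^2\Rightarrow a=\pm d$ and $a^2=1\Rightarrow a=\pm 1$) and to the automatic invertibility of an involution are minor tidy-ups of the paper's argument, not a different method.
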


\begin{proof}
Set $\hat{\eta}_1(\rho_1)=\left( \begin{array}{c@{}}
   \begin{matrix}
   		a & b\\
   		c & d
   		\end{matrix}
\end{array} \right), \text{ where } a, b, c, d \in \mathbb{Z}[t^{\pm 1}].$ The only relation between the generators of $T_2$ is $\rho_1^2=1$, which implies that $\hat{\eta}_1(\rho_1)^2=I_2$. Applying this relation, we get the following system of equations.
\begin{equation} \label{eqq1}
a^2+bc=1,
\end{equation} 
\begin{equation} \label{eqq2}
bd+ab=0,
\end{equation} 
\begin{equation} \label{eqq3}
cd+ac=0,
\end{equation}
\begin{equation} \label{eqq4}
d^2+bc=1.
\end{equation} 
Now, subtract Equation (\ref{eqq1}) from Equation (\ref{eqq4}), we get that $a^2=d^2$, which gives that $a=\pm d$. We consider four cases in the following.
\begin{itemize}
\item $a=d\neq 0$. From Equation (\ref{eqq2}) and Equation (\ref{eqq3}), we get that $b=c=0$, and so, from Equation (\ref{eqq1}) and Equation (\ref{eqq4}), we get that $a^2=d^2=1$, which implies that $a=\pm1$ and $d=\pm1$. But we have $a=d$, and so we get that:
\begin{itemize}
\item $\hat{\eta}_1$ is equivalent to $\hat{\eta}_1^{(4)}$ if $a=d=-1$.
\item $\hat{\eta}_1$ is equivalent to $\hat{\eta}_1^{(5)}$ if $a=d=1$.
\end{itemize}
\item $a=d=0$. From Equation (\ref{eqq1}) and Equation (\ref{eqq4}), we get that $bc=1$, and so $\hat{\eta}_1$ is equivalent to a special case of $\hat{\eta}_1^{(1)}$.
\item $a=-d$ and $b\neq 0$. From Equation (\ref{eqq4}), we get that $bc=1-a^2$, and so $\hat{\eta}_1$ is equivalent to $\hat{\eta}_1^{(1)}$, as required.
\item $a=-d$ and $b=0$. From Equation (\ref{eqq1}) and Equation (\ref{eqq4}), we get that $a^2=1$ and $d^2=1$, and so $a=\pm1$ and $d=\mp 1$. Hence, we get that
\begin{itemize}
\item $\hat{\eta}_1$ is equivalent to a special case of $\hat{\eta}_1^{(2)}$ if $a=1$ and $d=-1$.
\item $\hat{\eta}_1$ is equivalent to a special case of $\hat{\eta}_1^{(3)}$ if $a=-1$ and $d=1$.
\end{itemize}
\end{itemize}
Therefore, $\hat{\eta}_1$ is equivalent to one of the five representations $\hat{\eta}_1^{(i)}$, $1\leq i \leq 5$, as required.
\end{proof}

We now find all $2$-local extensions of the representation $\eta_1$ of $T_n$ to $VT_n$ for all $n\geq 3$.

\begin{theorem} \label{n3}
Consider $n\geq 3$ and let $\hat{\eta}_1: VT_n \rightarrow \mathrm{GL}_n(\mathbb{Z}[t^{\pm 1}])$ be a $2$-local representation of $VT_n$ that extends $\eta_1$. Then, $\hat{\eta}_1$ has only one form, which is given by acting on the generators of $VT_n$ as follows.
$$\hat{\eta}_1(s_i)= \left( \begin{array}{c|@{}c|c@{}}
   \begin{matrix}
     I_{i-1} 
   \end{matrix} 
      & 0 & 0 \\
      \hline
    0 &\hspace{0.2cm} \begin{matrix}
   	1-t & t\\
   	2-t & t-1\\
\end{matrix}  & 0  \\
\hline
0 & 0 & I_{n-i-1}
\end{array} \right)$$
and 
$$\hat{\eta}_1(\rho_i)= \left( \begin{array}{c|@{}c|c@{}}
   \begin{matrix}
     I_{i-1} 
   \end{matrix} 
      & 0 & 0 \\
      \hline
    0 &\hspace{0.2cm} \begin{matrix}
   	0 & b\\
   	\dfrac{1}{b} & 0\\
\end{matrix}  & 0  \\
\hline
0 & 0 & I_{n-i-1}
\end{array} \right)$$
for $1 \leq i \leq n-1$, where $b \in \mathbb{Z}[t^{\pm 1}]$ is invertible.
\end{theorem}

\begin{proof}
The proof follows the same strategy as in Theorem \ref{n2}.  
The key relations used here are 
\[
\rho_1 \rho_2 \rho_1 = \rho_2 \rho_1 \rho_2, \quad \rho_1^2 = 1, \quad \text{and} \quad \rho_1 \rho_2 s_1 = s_2 \rho_1 \rho_2.
\]  
Applying the representation $\hat{\eta}_1$ to these relations yields a system of equations, and solving them in the same manner as in Theorem \ref{n2} implies our result.
\end{proof}

\begin{question}
Consider the representation $\hat{\eta}_1: VT_n \rightarrow \mathrm{GL}_n(\mathbb{Z}[t^{\pm 1}])$ given in Theorem \ref{n3}. Can we find necessary and sufficient conditions for the irreducibility of this representation as we did for $\eta_1$?
\end{question}

Now, we prove in the next theorem a different result regarding extending the representation $\eta_1$ to $WT_n$.

\begin{theorem} \label{thmmm}
There is no $2$-local extension for the representation $\eta_1$ of $T_n$ to $WT_n$ For all $n\geq 3$.
\end{theorem}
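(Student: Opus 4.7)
The plan is to leverage Theorem \ref{n3} to pin down the shape of any putative extension, and then show that a single entry of the welded relation (\ref{eqs8}) already fails in the ring $\mathbb{Z}[t^{\pm 1}]$. Since every $2$-local extension of $\eta_1$ to $WT_n$ is \emph{a fortiori} a $2$-local extension to $VT_n$ (because the relations of $VT_n$ are included among those of $WT_n$), Theorem \ref{n3} forces $\hat{\eta}_1(s_i)$ to have the given form and $\hat{\eta}_1(\rho_i)$ to be the block matrix whose $2\times 2$ nontrivial block is the anti-diagonal $\bigl(\begin{smallmatrix} 0 & b \\ 1/b & 0 \end{smallmatrix}\bigr)$ for some nonzero $b\in\mathbb{Z}[t^{\pm 1}]$. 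Thus only one free parameter $b$ remains, and the task is to rule out every choice of it.

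Next I would test the welded relation (\ref{eqs8}) for the smallest index $i=1$, namely $\rho_1 s_2 s_1 = s_2 s_1 \rho_2$. Because $s_1, s_2, \rho_1, \rho_2$ are all $2$-local and their nontrivial parts occupy coordinates $1,2,3$, the identity can be verified on the $3\times 3$ principal block and the argument is therefore insensitive to $n$ once $n\geq 3$. A direct multiplication yields
$$s_2 s_1 = \begin{pmatrix} 1-t & t & 0 \\ (1-t)(2-t) & -(1-t)^2 & t \\ (2-t)^2 & (2-t)(t-1) & t-1 \end{pmatrix},$$
after which multiplying on the left by $\rho_1$ (i.e.\ swapping its first two rows up to the factor $b$) and on the right by $\rho_2$ (i.e.\ swapping its last two columns up to the factor $b$) produces the two $3\times 3$ candidates to be compared.

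Finally, I would compare a single entry. In the $(1,2)$ position, $\rho_1 s_2 s_1$ contributes $-b(1-t)^2$, whereas $s_2 s_1 \rho_2$ contributes $0$, because the $(1,2)$ entry of $s_2 s_1$ is $t$ which is sent by $\rho_2$ to the $(1,3)$ slot only. The equality $-b(1-t)^2 = 0$ in the integral domain $\mathbb{Z}[t^{\pm 1}]$ forces $b=0$, contradicting $b\neq 0$. This single entry-level obstruction completes the proof for every $n\geq 3$.

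The main obstacle is really just computational bookkeeping in the $3\times 3$ product; the conceptual work is already done by Theorem \ref{n3}, which reduces the problem from infinitely many unknowns (the entries of $\hat{\eta}_1(\rho_i)$) to the single parameter $b$. Once that reduction is in place, a one-entry check on one instance of (\ref{eqs8}) is enough.
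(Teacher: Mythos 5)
Your proposal is correct and follows essentially the same route as the paper: invoke Theorem \ref{n3} (since the $VT_n$ relations are a subset of the $WT_n$ relations) to force the unique form of $\hat{\eta}_1(\rho_i)$, then show relation (\ref{eqs8}) fails. The only difference is that you spell out the "direct computations" the paper leaves implicit, and your $(1,2)$-entry check $-b(1-t)^2=0$ is accurate.
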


\begin{proof}
Consider $n\geq 3$ and suppose to get a contradiction that there exists a $2$-local representation $\hat{\eta}_1: WT_n \rightarrow \mathrm{GL}_n(\mathbb{Z}[t^{\pm 1}])$ that extends $\eta_1$. Considering the relations (\ref{eqs3}), \ldots, (\ref{eqs7}) of the generators of $WT_n$, we can see that $\hat{\eta}_1$ has only one form, which is given in Theorem \ref{n3}. That is, 
$$\hat{\eta}_1(s_i)= \left( \begin{array}{c|@{}c|c@{}}
   \begin{matrix}
     I_{i-1} 
   \end{matrix} 
      & 0 & 0 \\
      \hline
    0 &\hspace{0.2cm} \begin{matrix}
   	1-t & t\\
   	2-t & t-1\\
\end{matrix}  & 0  \\
\hline
0 & 0 & I_{n-i-1}
\end{array} \right)$$
and 
$$\hat{\eta}_1(\rho_i)= \left( \begin{array}{c|@{}c|c@{}}
   \begin{matrix}
     I_{i-1} 
   \end{matrix} 
      & 0 & 0 \\
      \hline
    0 &\hspace{0.2cm} \begin{matrix}
   	0 & b\\
   	\dfrac{1}{b} & 0\\
\end{matrix}  & 0  \\
\hline
0 & 0 & I_{n-i-1}
\end{array} \right)$$
for $1 \leq i \leq n-1$, where $b \in \mathbb{Z}[t^{\pm 1}]$ is invertible. Now, direct computations imply that $\hat{\eta_1}$ does not preserve the relations (\ref{eqs8}) of $WT_n$; that is:
$$\hat{\eta_1}(\rho_i)\hat{\eta_1}(s_{i+1})\hat{\eta_1}(s_i)\neq \hat{\eta_1}(s_{i+1})\hat{\eta_1}(s_i)\hat{\eta_1}(\rho_{i+1})$$
for $1\leq i \leq n-1$, which is a contradiction. Therefore, there is no extension for the representation $\eta_1$ of $T_n$ to $WT_n$ for all $n\geq 3$.
\end{proof}

\section{The representation $\eta_2$ of $T_n$} 

In this section, we aim to introduce a family of matrix representations of $T_n$, namely $\eta_2: T_n \rightarrow \mathrm{GL}_n(\mathbb{Z}[t^{\pm 1}])$, that can be extended to the group $WT_n$ in the $2$-local extension way. Let $f(t)\in \mathbb{Z}[t^{\pm 1}]$ be invertible. Consider the following mapping $\eta_2: T_n \rightarrow \mathrm{GL}_n(\mathbb{Z}[t^{\pm 1}])$ given by acting on the generators $s_i, 1\leq i \leq n-1,$ of $T_n$ as follows.
$$\eta_2(s_i)\rightarrow \left( \begin{array}{c|@{}c|c@{}}
   \begin{matrix}
     I_{i-1} 
   \end{matrix} 
      & 0 & 0 \\
      \hline
    0 &\hspace{0.2cm} \begin{matrix}
   	0 & f(t)\\
   	f(t)^{-1} & 0\\
\end{matrix}  & 0  \\
\hline
0 & 0 & I_{n-i-1}
\end{array} \right).$$

\begin{proposition} \label{porp}
The mapping $\eta_2$ defines a representation of $T_n$.
\end{proposition}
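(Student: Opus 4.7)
The plan is to verify directly that the assignment $\eta_2$ respects the two defining relations (\ref{eqs1}) and (\ref{eqs2}) of $T_n$, which is all that is required to conclude that $\eta_2$ extends to a well-defined group homomorphism on $T_n$.

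First I would check the involution relation $s_i^2 = 1$. Since $\eta_2(s_i)$ is block-diagonal with the nontrivial block
\[
M = \begin{pmatrix} 0 & f(t) \\ f^{-1}(t) & 0 \end{pmatrix}
\]
sitting in rows/columns $i, i+1$ and identity blocks elsewhere, it suffices to compute
\[
M^2 = \begin{pmatrix} f(t)f^{-1}(t) & 0 \\ 0 & f^{-1}(t)f(t) \end{pmatrix} = I_2,
\]
which uses only $f(t) f^{-1}(t) = 1$. Thus $\eta_2(s_i)^2 = I_n$, establishing (\ref{eqs1}).

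Next I would verify the commutation relation $s_i s_j = s_j s_i$ for $|i-j| \geq 2$. Because $\eta_2$ is $2$-local and $|i-j| \geq 2$, the two nontrivial $2 \times 2$ blocks of $\eta_2(s_i)$ and $\eta_2(s_j)$ occupy disjoint sets of rows and columns; outside these blocks both matrices act as the identity. A direct block multiplication (equivalently, observing that each matrix can be written as $I + E_i$ and $I + E_j$ where $E_i$ and $E_j$ have disjoint supports so $E_i E_j = E_j E_i = 0$) shows
\[
\eta_2(s_i)\eta_2(s_j) = \eta_2(s_j)\eta_2(s_i),
\]
establishing (\ref{eqs2}).

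Since both defining relations are preserved and the matrices are invertible (each $\eta_2(s_i)$ is its own inverse by the first step), $\eta_2$ descends to a homomorphism $T_n \to \mathrm{GL}_n(\mathbb{Z}[t^{\pm 1}])$. No serious obstacle is expected here; the argument is mechanical and relies only on $f(t) f^{-1}(t) = 1$ and the disjoint-support observation standard for $2$-local representations.
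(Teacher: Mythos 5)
Your proposal is correct and follows the same route the paper intends: the paper's proof simply states that $\eta_2$ "clearly" preserves the relations of $T_n$, and your computation of $M^2 = I_2$ together with the disjoint-support argument for $|i-j|\geq 2$ is exactly the verification being left implicit. No issues.
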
  
\begin{proof}
We can directly see that $\eta_2$ preserves the defining relations of $T_n$, and therefore, $\eta_2$ defines a representation of $T_n$.
\end{proof}

We now study the irreducibility of the representation $\eta_2$.

\begin{theorem} \label{rrrrr}
The representation $\eta_2: T_n \rightarrow \mathrm{GL}_n(\mathbb{Z}[t^{\pm 1}])$ is reducible.
\end{theorem}

\begin{proof}
We construct an equivalent representation of $\eta_2$, namely $\hat{\eta_2}$. For this purpose, consider 
$$P = \mathrm{diag}\left(f(t)^{n-1}, f(t)^{n-2}, \ldots, f(t), 1\right),$$ where $\mathrm{diag}(r_1, r_2, \ldots, r_n)$ denotes the $n \times n$ diagonal matrix with diagonal entries $r_1, r_2, \ldots, r_n$. Define the representation $\hat{\eta_2}$ of $T_n$ by $\hat{\eta_2} = P^{-1}\eta_2P.$ 
Clearly, $\hat{\eta_2}$ is equivalent to $\eta_2$, and the action of $\hat{\eta_2}$ on the generators of $T_n$ is given as follows.
$$\hat{\eta_2}(s_i)= 
\left( \begin{array}{c|@{}c|c@{}}
   \begin{matrix} I_{i-1} \end{matrix} & 0 & 0 \\
   \hline
   0 & \hspace{0.2cm} \begin{matrix} 0 & 1\\ 1 & 0 \end{matrix} & 0 \\
   \hline
   0 & 0 & I_{n-i-1} 
\end{array} \right)$$
for $1 \leq i \leq n-1$. Direct computations imply that the column vector $(1,1,\ldots, 1)^T$ is invariant under $\hat{\eta_2}(s_i)$ for all $1\leq i \leq n-1$. Hence $\hat{\eta_2}$ is reducible, and so $\eta_2$ is reducible.
\end{proof}
 
Now, we present in the next theorem a result on the faithfulness of the representation $\eta_2$.

\begin{theorem} \label{fa2}
The representation $\eta_2: T_n \rightarrow \mathrm{GL}_n(\mathbb{Z}[t^{\pm 1}])$ is faithful for $n=2$ and unfaithful for all $n\geq 3$.
\end{theorem}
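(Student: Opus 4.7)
The plan is to handle the two cases separately. For $n=2$, the group $T_2$ has only two elements, namely $e$ and $s_1$, and the matrix $\eta_2(s_1) = \left(\begin{smallmatrix} 0 & f(t) \\ f^{-1}(t) & 0 \end{smallmatrix}\right)$ is clearly not the identity, so the kernel is trivial and $\eta_2$ is faithful.

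For $n\geq 3$ the strategy is to exhibit an explicit nontrivial element in $\ker(\eta_2)$. The natural candidate is $w=(s_1s_2)^3$, because $\eta_2(s_i)$ is a weighted transposition of the $i$-th and $(i{+}1)$-th basis vectors, so $\eta_2(s_1s_2)$ acts as a weighted $3$-cycle on $\{e_1,e_2,e_3\}$ (and as the identity on $e_4,\ldots,e_n$). First, I will compute $\eta_2(s_1s_2)$ directly from the definition and obtain a $3\times 3$ monomial block with nonzero entries $f^{-1},f^{-1},f^2$. Cubing this block, the weights telescope (each basis vector picks up a product $f^{-1}\cdot f^{-1}\cdot f^{2}=1$ or $f^{-1}\cdot f^{2}\cdot f^{-1}=1$, etc.), so $\eta_2(w)=I_n$.

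It remains to argue that $w\neq e$ in $T_n$. For this I will use the retraction $\phi:T_n\twoheadrightarrow T_3$ defined on generators by $\phi(s_1)=s_1$, $\phi(s_2)=s_2$, and $\phi(s_i)=e$ for $i\geq 3$. This is a well-defined homomorphism: the involution relations $s_i^2=1$ are trivially preserved, and every commutation relation $s_is_j=s_js_i$ with $|i-j|\geq 2$ becomes either $s_i\cdot e = e\cdot s_i$ or $e\cdot e=e\cdot e$ in $T_3$, both of which hold. Since $T_3=\mathbb{Z}_2*\mathbb{Z}_2$ is the infinite dihedral group, $s_1s_2$ has infinite order in $T_3$, and in particular $\phi(w)=(s_1s_2)^3\neq e$. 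Hence $w\neq e$ in $T_n$, so $w$ is a nontrivial element of $\ker(\eta_2)$ and $\eta_2$ is unfaithful for all $n\geq 3$.

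The only delicate step is verifying that $\phi$ descends to a homomorphism; aside from that, the proof is essentially a direct matrix computation. No obstacle of substance is expected.
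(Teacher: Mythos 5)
Your proof is correct and follows essentially the same route as the paper: faithfulness for $n=2$ is immediate from $|T_2|=2$, and unfaithfulness for $n\geq 3$ comes from checking that $(s_1s_2)^3$ (the paper uses $(s_is_{i+1})^3$) lies in the kernel while being nontrivial in $T_n$. Your retraction argument onto $T_3=\mathbb{Z}_2*\mathbb{Z}_2$ to justify nontriviality is a welcome detail that the paper merely asserts.
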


\begin{proof}
For $n=2$, it is trivial that the representation $\eta_2$ is faithful as $T_2=\{e,s_1\}$ and $\eta_2(s_1)\neq I_2$. Now, for $n\geq 3$, we can see that $\eta_2((s_is_{i+1})^3)=I_n$ with $(s_is_{i+1})^3$ are non-trivial elements of $T_n$, and so, $\eta_2$ is unfaithful.
\end{proof}

\subsection{On extending $\eta_2$ to $VT_n$ and $WT_n$}

Here, we aim to extend the representation $\eta_2: T_n \rightarrow \mathrm{GL}_n(\mathbb{Z}[t^{\pm 1}])$ to $VT_n$ and $WT_n$ in the $2$-local extension way. However, we prove that there is a family of $2$-local representations of $VT_n$ and $WT_n$ that extends $\eta_2$. Our work will be for $n\geq 3$ since for $n=2$ the result is similar to that for Theorem \ref{n2}.

\vspace*{0.15cm}

First, we find all $2$-local extensions of the representation $\eta_2$ to $VT_n$ for all $n\geq 3$.

\begin{theorem} \label{n33}
Consider $n\geq 3$ and let $\hat{\eta}_2: VT_n \rightarrow \mathrm{GL}_n(\mathbb{Z}[t^{\pm 1}])$ be a $2$-local representation of $VT_n$ that extends $\eta_2$. Then, $\hat{\eta}_2$ has only one form, which is given by acting on the generators of $VT_n$ as follows.
$$\hat{\eta}_2(s_i)= \left( \begin{array}{c|@{}c|c@{}}
   \begin{matrix}
     I_{i-1} 
   \end{matrix} 
      & 0 & 0 \\
      \hline
    0 &\hspace{0.2cm} \begin{matrix}
   	0 & f(t)\\
   	f(t)^{-1} & 0\\
\end{matrix}  & 0  \\
\hline
0 & 0 & I_{n-i-1}
\end{array} \right)$$
and 
$$\hat{\eta}_2(\rho_i)= \left( \begin{array}{c|@{}c|c@{}}
   \begin{matrix}
     I_{i-1} 
   \end{matrix} 
      & 0 & 0 \\
      \hline
    0 &\hspace{0.2cm} \begin{matrix}
 	0 & g(t)\\
   	g(t)^{-1} & 0\\
\end{matrix}  & 0  \\
\hline
0 & 0 & I_{n-i-1}
\end{array} \right)$$
for $1 \leq i \leq n-1$, where $g(t) \in \mathbb{Z}[t^{\pm 1}]$ is invertible.
\end{theorem}

\begin{proof}
The argument follows the same reasoning as in Theorems \ref{n2} and \ref{n3}, so we omit the details.
\end{proof}

Now, we find all $2$-local extensions of the representation $\eta_2$ to $WT_n$ for all $n\geq 3$.

\begin{theorem} \label{n333}
Consider $n\geq 3$ and let $\hat{\eta}_2: WT_n \rightarrow \mathrm{GL}_n(\mathbb{Z}[t^{\pm 1}])$ be a $2$-local representation of $WT_n$ that extends $\eta_2$. Then, $\hat{\eta}_2$ is equivalent to the representation given in Theorem \ref{n33}.
\end{theorem}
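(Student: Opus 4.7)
The plan is to piggy-back on Theorem \ref{n33} and then verify that nothing new breaks. First, observe that the defining relations of $WT_n$ consist of the relations (\ref{eqs1})--(\ref{eqs7}) of $VT_n$ together with the extra relations (\ref{eqs8}). Consequently, any $2$-local representation $\hat{\eta}_2: WT_n \to \text{GL}_n(\mathbb{Z}[t^{\pm 1}])$ extending $\eta_2$ is, a fortiori, a $2$-local representation of $VT_n$ extending $\eta_2$. By Theorem \ref{n33}, it must therefore have exactly the form displayed there: each $\hat{\eta}_2(\rho_i)$ is obtained by inserting an anti-diagonal $2\times 2$ block with entries $g(t)$ and $g^{-1}(t)$ into rows and columns $i,i+1$, for some fixed nonzero $g(t) \in \mathbb{Z}[t^{\pm 1}]$.

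What remains is to check that this candidate actually satisfies the welded relations (\ref{eqs8}), i.e.\ that $\hat{\eta}_2(\rho_i)\hat{\eta}_2(s_{i+1})\hat{\eta}_2(s_i) = \hat{\eta}_2(s_{i+1})\hat{\eta}_2(s_i)\hat{\eta}_2(\rho_{i+1})$ for $1 \le i \le n-2$. Both sides act as the identity outside the three consecutive coordinates $i,i+1,i+2$, so the verification reduces to comparing two explicit $3\times 3$ matrices. These are computed by a direct block multiplication, and the crucial point is that the $s$-block and the $\rho$-block share the same anti-diagonal shape. When the triple products are expanded, every nonzero entry is a monomial in integer powers of $f(t)$ and $g(t)$, and since $\mathbb{Z}[t^{\pm 1}]$ is commutative we have $fg = gf$ and $fg^{-1} = g^{-1}f$, which is exactly what is needed for the two matrices to agree. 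In particular, no new constraint linking $f$ and $g$ is imposed, so every extension allowed by Theorem \ref{n33} survives.

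The step I expect to be the main obstacle is precisely this $3\times 3$ verification, because the analogous computation for $\eta_1$ failed (Theorem \ref{thmmm}). The structural reason the calculation succeeds here is the fully anti-diagonal form shared by both $\eta_2(s_i)$ and $\hat{\eta}_2(\rho_i)$, combined with commutativity of the coefficient ring; for $\eta_1$ the $s$-block is not anti-diagonal and this symmetry is lost. Once this multiplication is carried out carefully, the statement of Theorem \ref{n333} follows immediately from Theorem \ref{n33}.
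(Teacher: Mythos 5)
Your proposal is correct and takes essentially the same route as the paper: first use the $VT_n$ relations to reduce to the forms classified in Theorem \ref{n33}, then verify directly that relations (\ref{eqs8}) hold for those forms. You actually supply more detail than the paper does (the paper simply asserts the verification of (\ref{eqs8}) is easy), and your structural explanation via the shared anti-diagonal block shape and commutativity of $\mathbb{Z}[t^{\pm 1}]$ is accurate.
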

\begin{proof}
Considering the defining relations of $VT_n$, we see that the possible forms of $\hat{\eta}_2$ are those obtained in Theorem \ref{n33}.  Moreover, it is straightforward to verify that the relations \eqref{eqs8} of $WT_n$ are also satisfied for these representations, namely,
\[
\hat{\eta}_2(\rho_i)\, \hat{\eta}_2(s_{i+1})\, \hat{\eta}_2(s_i) = \hat{\eta}_2(s_{i+1})\, \hat{\eta}_2(s_i)\, \hat{\eta}_2(\rho_{i+1}).
\]  
Therefore, $\hat{\eta}_2$ is equivalent to the representation given in Theorem \ref{n33}, as claimed.
\end{proof}

\begin{question}
Consider the representation $\hat{\eta}_2: VT_n \rightarrow \mathrm{GL}_n(\mathbb{Z}[t^{\pm 1}])$ given in Theorem \ref{n33}. Under what conditions of $f(t)$ and $g(t)$ can we find necessary and sufficient conditions for the irreducibility of this representation?
\end{question}

\section{Conclusions}
We constructed two representations of the twin group $T_n$, $n\geq 2$, in two different approaches, which is our first major finding in this paper. The second major result is that we completely answered the question regarding the irreducibility of these representations. The third significant outcome is that we studied the faithfulness of these two representations in many cases. Finally, the fourth main result is that we investigated the possibility of extending these two representations to $VT_n$ and $WT_n$ in the $2$-local extension way. The paper addresses several questions for further research.

\vspace*{0.1cm}

\end{document}